\documentclass{amsart}

\makeindex

\usepackage[all, cmtip]{xy}
\usepackage{amssymb}

\usepackage{algorithmic}
\usepackage{amsmath}

\usepackage{amsthm}

\usepackage{amscd}

\usepackage{amsfonts}

\usepackage{amssymb}

\usepackage[pdftex]{graphicx}

\usepackage{multirow}

\usepackage{color}

\usepackage{epic}

\usepackage{graphicx}

\usepackage{pgf,tikz}
\usetikzlibrary{arrows}

\setcounter{tocdepth}{1}

\newtheorem{theorem}{Theorem}[section]
\newtheorem{lemma}[theorem]{Lemma}
\newtheorem{proposition}[theorem]{Proposition}

\newtheorem{question}[theorem]{Question}
\theoremstyle{corollary}
\newtheorem{corollary}[theorem]{Corollary}

\theoremstyle{definition}     
\newtheorem{definition}[theorem]{Definition}

\newtheorem{example}[theorem]{Example}

\newtheorem{claim}[theorem]{Claim}
\theoremstyle{remark}
\newtheorem{remark}[theorem]{Remark}

\numberwithin{equation}{section}

\newcommand{\C}{\mathbb{C}}

\newcommand{\R}{\mathbb{R}}
\newcommand{\Z}{\mathbb{Z}}
\newcommand{\N}{\mathbb{N}}
\newcommand{\Q}{\mathbb{Q}}
\def\P{\mathbb{P}}

\def\Pic{\operatorname{Pic}}
\def\Proj{\operatorname{Proj}}

\def\mult{\operatorname{mult}}

\def\Nef{\operatorname{Nef}}
\def\Eff{\operatorname{Eff}}

\def\cha{\operatorname{char}}



\input xy
\xyoption{all}

\title[Cox rings and redundant blow-ups]{Cox rings of rational surfaces and \\ redundant blow-ups}

\begin{document}

\author{DongSeon Hwang}
\address{Department of Mathematics, Ajou University, Suwon, Korea}
\email{dshwang@ajou.ac.kr}

\author{Jinhyung Park}
\address{Department of Mathematical Sciences, KAIST, Daejeon, Korea}
\curraddr{School of Mathematics, Korea Institute for Advanced Study, Seoul, Korea}
\email{parkjh13@kaist.ac.kr}

\thanks{DongSeon Hwang was partially supported by Basic Science Research Program through the National Research Foundation of Korea (NRF) funded by the Ministry of Education (2011-0022904).
Jinhyung Park was partially supported by TJ Park Science Fellowship for Ph.D Students.}

\subjclass[2010]{Primary 14J26; Secondary 14C20}

\date{\today}

\keywords{redundant blow-up, rational surface, Cox ring, Mori dream space, Zariski decomposition.}

\begin{abstract}
We prove that the redundant blow-up preserves the finite generation of the Cox ring of a rational surface under a suitable assumption, and we study the birational structure of Mori dream rational surfaces via redundant blow-ups. It turns out that the redundant blow-up completely characterizes birational morphisms of Mori dream rational surfaces with anticanonical Iitaka dimension $0$. As an application, we construct new Mori dream rational surfaces with anticanonical Iitaka dimension $0$ and $-\infty$ of arbitrarily large Picard number.
\end{abstract}

\maketitle

\tableofcontents

\section{Introduction}

Algebraic varieties with finitely generated Cox rings, or equivalently, Mori dream spaces, have attracted considerable attention for various purposes. As its name suggests, Cox rings decode lots of birational structures of Mori dream spaces in view of Mori theory. On the other hand, the universal torsor, which plays an important role in fining rational points (\cite{CTS}), of a Mori dream space can be explicitly calculated from the Cox ring. However, the classification of such varieties has been remained as a challenging problem, even in the case of rational surfaces.

Testa, V\'{a}rilly-Alvarado, and Velasco (\cite[Theorem 1]{TVV10}) and Chen and Schnell (\cite[Theorem 3]{CS08}) independently showed that the Cox ring of every big anticanonical rational surface, i.e., a smooth projective rational surface with big anticanonical divisor, is finitely generated.
Motivated by this result, Artebani and Laface (\cite{AL11}) investigated a rational surface $S$ with a finitely generated Cox ring according to its \emph{anticanonical Iitaka dimension}
$$\kappa(-K_S) := \max \{\dim \varphi_{|-nK_S|}(S) : n \in \N\},$$
whose value is one of  $2,1,0,$ and $-\infty$. In the case $\kappa(-K_S)=1$, the Cox ring of a rational surface $S$ is finitely generated if and only if the relatively minimal model of the elliptic fibration $\pi \colon S \rightarrow \P^1$ has a Jacobian fibration with a finite Mordell-Weil group  in characteristic zero (\cite[Theorem 4.2, Theorem 4.8, and Theorem 5.3]{AL11}).
In contrast, very little is known for the case $\kappa(-K_S) \leq 0$. Some studies have investigated a rational surface $S$ with a finitely generated Cox ring when $\kappa(-K_S)=0$, and only one such example (that is a Coble rational surface) was recently given by Laface and Testa (\cite[Theorem 6.3]{LT11}). However, to the best of the authors' knowledge, there was no known example for the case $\kappa(-K_S)=-\infty$.

The principal aim of the present paper is to propose a systematic way to study the classification of rational surfaces with finitely generated Cox rings in terms of redundant blow-ups. Here, we briefly introduce the notion of redundant blow-ups, basically developed in the authors' previous paper \cite{HP} motivated by Sakai's work (\cite{Sak84}). Let $S$ be a smooth projective rational surface with pseudo-effective anticanonical divisor, and let $-K_S = P + N$ be the Zariski decomposition. A point $p$ in $S$ is called a \emph{redundant point} if $\mult_p N \geq 1$, and the blow-up $f \colon \widetilde{S} \rightarrow S$ at a redundant point $p$ is called a \emph{redundant blow-up}. Note that the redundant blow-up preserves the anticanonical Iitaka dimension.

We first point out that the redundant blow-up plays a dominant role in studying morphisms between rational surfaces with finitely generated Cox rings. Even though there are  many types of morphisms between big anticanonical rational surfaces in general, every big anticanonical rational surface can be obtained by a sequence of redundant blow-ups from the minimal resolution of a del Pezzo surface with rational singularities (\cite[Proposition 4.1 and Theorem 4.3]{Sak84}). On the contrary, there are only two types of morphisms for the case $\kappa(-K)=1$ (see \cite[Lemma 4.4]{AL11} and Theorem \ref{1red} for more detail). Furthermore, it turns out that the redundant blow-up completely characterizes morphisms for the case $\kappa(-K) = 0$.

\begin{theorem}\label{chbl}
Let $f \colon \widetilde{S} \rightarrow S$ be a birational morphism of smooth projective rational surfaces with finitely generated Cox rings. If $\kappa(-K_{\widetilde{S}})=\kappa(-K_{S})=0$, then $f$ is a sequence of redundant blow-ups.
\end{theorem}

Recall that the Cox ring of a rational surface is finitely generated if and only if the effective cone is rational polyhedral and every nef divisor is semiample (\cite[Proposition 2.9]{HK00}). Most of the previous studies on the characterization of rational surfaces having finitely generated Cox rings is concentrated on the semiampleness of a nef divisor. Although the rational polyhedrality of the effective cone is already interesting in its own right and has been studied by many authors in a variety of flavors (see e.g., \cite{GM2}, \cite{H} ,\cite{LH}, \cite{Nik00}, \cite{Tot10}), it is still not well understood. The main difficulty is due to the fact that the blow-up in general changes lots of the structure of the effective cone. In this viewpoint, it is an interesting problem under what condition the blow-up preserves the finite generation of Cox rings. The following is the main theorem of this paper.

\begin{theorem}\label{redfg}
Let $f \colon \widetilde{S} \to S$ be a redundant blow-up at a point $p$ of a Mori dream rational surface with $\kappa(-K_S) \geq 0$, and let $-K_S = P+N$ be the Zariski decomposition.
Then the Cox ring of $\widetilde{S}$ is finitely generated unless $\kappa(-K_S)=0$ and $\mult_p N =1$.
\end{theorem}

To prove Theorem \ref{redfg}, we bound the number of all possible curves on $S$ whose strict transforms to $\widetilde{S}$ become negative curves.

\begin{remark}
The assumption in Theorem \ref{redfg} is necessary. More precisely, there exists a rational surface $S$ with $\kappa(-K_S)=0$ admitting a redundant blow-up $\widetilde{S} \rightarrow S$ such that the Cox ring of $S$ is finitely generated but the Cox ring of $\widetilde{S}$ is not (see Example \ref{nonmdrsex}).  On the other hand, one can also construct a  rational surface $S$ with $\kappa(-K_S)=0$ and $\mult_p N = 1$ such that the redundant blow-up $\widetilde{S} \rightarrow S$ at $p$ preserves the finite generation of Cox ring (see Remark \ref{-infrem}).
\end{remark}

Now, thanks to Theorems \ref{chbl} and \ref{redfg}, we can construct many new rational surfaces with finitely generated Cox rings by taking redundant blow-ups. In particular, we construct a series of infinitely many new examples of rational surfaces with finitely generated Cox rings for $\kappa(-K_S) = 0$ and $-\infty$. The following theorem will be shown by explicit construction (see Example \ref{-infty}).

\begin{theorem}\label{exthm}
For each $n \geq 11$, there exist smooth projective rational surfaces $S$ and $\widetilde{S}$ with finitely generated Cox rings such that
 \begin{enumerate}
  \item $\kappa(-K_S)=0$ and $\rho(S) = n$, and
  \item $\kappa(-K_{\widetilde{S}})=-\infty$ and $\rho(\widetilde{S}) = n+1$,
 \end{enumerate}
where $\rho$ denotes the Picard number.
\end{theorem}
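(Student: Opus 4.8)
\emph{Overall strategy and part (1).} The plan is to produce, for each $n\geq 11$, a single pair $(S,\widetilde S)$ by starting from one carefully chosen base surface of Picard number $11$ and feeding it into Theorems \ref{redfg} and \ref{chbl}. First I would fix a base $S_0$: a smooth projective rational surface with finitely generated Cox ring, $\rho(S_0)=11$ and $\kappa(-K_{S_0})=0$, whose anticanonical Zariski decomposition $-K_{S_0}=P+N$ admits a point $p$ with $\mult_p N>1$ (so that a redundant blow-up at $p$ lies \emph{outside} the exceptional case of Theorem \ref{redfg}). Natural candidates are a Coble-type rational surface whose (pluri-)anticanonical divisor is suitably singular or non-reduced, or the minimal resolution $g\colon S_0\to\bar S$ of a normal rational surface $\bar S$ with $-K_{\bar S}$ nef, $\kappa(-K_{\bar S})=0$ and a rational singularity that is \emph{not} log canonical --- by the discrepancy formula $N=\sum|d_i|E_i$ then carries a component of coefficient $>1$, and Theorem \ref{redpt}(2) even supplies a whole curve of such points. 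Given $S_0$, I would apply a sequence of $n-11$ redundant blow-ups, at each step choosing a redundant point $q$ with $\mult_q N>1$; this remains possible at every stage because, after blowing up such a $q$, the strict transform of $N$ and the new exceptional curve $E$ still meet the new negative part with total multiplicity $>1$, and after a bounded number of steps $E$ itself enters the negative part with coefficient $\geq 2$, so entire curves of admissible redundant points become available. By Theorem \ref{redfg} each of these blow-ups keeps the Cox ring finitely generated and keeps $\kappa(-K)=0$ while raising $\rho$ by one, so after $n-11$ steps we obtain $S$ with $\rho(S)=n$, as required in (1).

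\emph{Part (2).} Starting from the surface $S$ just produced, I would perform one \emph{non-redundant} blow-up $f\colon\widetilde S\to S$ at a point $q$ with $\mult_q N<1$ --- e.g.\ a general point off $\operatorname{Supp}N$, or a general point of a component of $N$ whose coefficient is $<1$ --- chosen moreover so that $\widetilde S$ is still a Mori dream space; then $\rho(\widetilde S)=n+1$. Granting finite generation of the Cox ring of $\widetilde S$, Theorem \ref{chbl} forces $\kappa(-K_{\widetilde S})\neq\kappa(-K_S)=0$, since $f$ is not redundant. On the other hand $\kappa(-K)$ cannot increase under a blow-up: $H^0(\widetilde S,-mK_{\widetilde S})=H^0(\widetilde S,f^*(-mK_S)-mE)$ is identified with the subspace of $H^0(S,-mK_S)$ consisting of sections vanishing to order $\geq m$ at $q$, so $\kappa(-K_{\widetilde S})\leq\kappa(-K_S)=0$. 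Combining the two, $\kappa(-K_{\widetilde S})=-\infty$, and $\widetilde S$ has all the properties claimed in (2).

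\emph{The main obstacle.} All of the above is bookkeeping of Picard numbers, multiplicities of $N$, and repeated use of the two theorems already proved, except for two genuinely substantive points, both of which amount to establishing that a surface is a Mori dream space. The first is the construction of the base $S_0$ together with the verification that its Cox ring is finitely generated; this is where one must actually compute --- exhibiting $\Eff(S_0)$ as a rational polyhedral cone whose extremal rays are spanned by negative curves and checking the semiampleness hypotheses of the Hu--Keel criterion, or reducing to a surface whose Cox ring is already understood (a del Pezzo surface with mild singularities, or the Laface--Testa Coble surface). The second, and in my view the harder, is showing that the non-redundant blow-up $f\colon\widetilde S\to S$ in part (2) still has a finitely generated Cox ring: this is false for a generic choice of $q$, so $q$ must be placed in special position (typically on a suitable negative curve), after which one has to re-examine the finitely many negative curves and the effective cone of $\widetilde S$ by hand. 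Once the Mori dream property is secured at these two places, the anticanonical Iitaka dimensions $\kappa(-K_S)=0$ and $\kappa(-K_{\widetilde S})=-\infty$ are pinned down automatically, and the lower bound $n\geq 11$ is exactly the smallest Picard number for which the base $S_0$ can be realized.
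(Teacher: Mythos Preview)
Your outline is logically coherent, and you have correctly isolated the two substantive issues: producing a base surface $S_0$ with $\rho=11$, $\kappa(-K_{S_0})=0$ and finitely generated Cox ring, and---more seriously---ensuring that the \emph{non-redundant} blow-up $\widetilde S\to S$ in part (2) is still a Mori dream space. But you do not actually resolve either point; you only name them. Since a generic non-redundant blow-up of a Mori dream surface is \emph{not} a Mori dream surface, ``placing $q$ in special position and re-examining the negative curves by hand'' is precisely the content of the theorem, not a detail to be deferred. As written, the proposal is an outline with its two hardest steps left open.

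The paper closes both gaps at once by an idea absent from your proposal: it works entirely within the category of rational surfaces carrying an effective $k^{*}$-action, where Knop's theorem guarantees the Mori dream property automatically. Concretely, one starts from $\P^2$ with the action $t\cdot(x,y,z)=(tx,ty,z)$, blows up the base points of a $k^{*}$-invariant cubic pencil to obtain the extremal rational elliptic surface $X_{11}$ (Picard number $10$, $\kappa(-K)=1$), and then blows up a $k^{*}$-fixed point $p$ lying on a singular fibre to obtain $S_p(0)$ with $\rho=11$ and $\kappa(-K)=0$; this is your $S_0$. The subsequent tower is built by always blowing up $k^{*}$-invariant points (intersection points of negative curves are automatically $k^{*}$-fixed), so the $k^{*}$-action extends at every step and finite generation of the Cox ring never needs to be checked by hand. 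The redundant blow-ups in the tower keep $\kappa=0$ by Lemma~\ref{redlem}; the final non-redundant blow-up at a second $k^{*}$-fixed point $q$ (not in the support of $N$) again inherits the $k^{*}$-action, hence is Mori dream, and Theorem~\ref{chbl} then forces $\kappa(-K_{\widetilde S})=-\infty$ exactly as you argue.

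In short: your use of Theorem~\ref{chbl} to pin down $\kappa=-\infty$ matches the paper, but the paper does \emph{not} rely on Theorem~\ref{redfg} for finite generation---the $k^{*}$-action does all the work there, and this is what makes the construction go through without the case-by-case verification you anticipate.
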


All the surfaces in Example \ref{-infty} with $\kappa(-K)=0$ have effective anticanonical divisor, so they are not Coble rational surfaces. In fact, all those surfaces are obtained by blow-ups from extremal rational elliptic surfaces with effective $k^{*}$-action. By a result of Knop (\cite{Kn}), all rational surfaces with effective $k^*$-action have finitely generated Cox rings, and their Cox rings can be explicitly calculated by \cite[Theorem 1.3]{HS10} in characteristic zero.

Finally, we propose another viewpoint on rational surfaces with finitely generated Cox rings.
Recall that the Cox ring of  the surface obtained by blowing up at least $9$  very general points on the projective plane is not finitely generated. However, the Cox ring of the surface obtained by blowing up at any number of points on a line on the projective plane is always finitely generated (\cite[Example 3.3]{EKW04}). Thus we observe that a rational surface obtained by blowing up at points in special position  has a tendency to have a finitely generated Cox ring. This observation motivates us to study the extremal case: finite generation of the Cox ring of the minimal resolution of a rational surface with Picard number one.

\begin{theorem}\label{minen}
Let $\bar{S}$ be a normal projective rational surface of Picard number one, and let $g \colon  S \rightarrow \bar{S}$ be its minimal resolution. Assume that $-K_{\bar{S}}$ is nef and $\bar{S}$ contains at worst log terminal singularities which are not canonical singularities. Then the Cox ring of $S$ is finitely generated.
\end{theorem}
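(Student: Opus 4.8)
The plan is to reduce Theorem~\ref{minen} to the case of no redundant points by repeatedly contracting redundant curves, and then invoke the big anticanonical case together with the structural results obtained above. First I would observe that since $-K_{\bar S}$ is nef and $\bar S$ has Picard number one, one has $\kappa(-K_{\bar S}) \geq 0$; moreover on the minimal resolution $S$ the pullback $g^*(-K_{\bar S})$ is the positive part of the Zariski decomposition $-K_S = P + N$, with $N$ the (effective, $g$-exceptional) discrepancy divisor, and $P^2 = (-K_{\bar S})^2 \geq 0$. There are two regimes: either $P^2 > 0$, in which case $-K_S$ is big and the Cox ring of $S$ is finitely generated by \cite[Theorem 1]{TVV10} (equivalently \cite[Theorem 3]{CS08}); or $P^2 = 0$, in which case $\kappa(-K_S) \in \{0,1\}$ and $S$ is a (possibly non-relatively-minimal) rational surface with a $-K$-semiample fibration.

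The heart of the argument is the case $P^2 = 0$. Here I would use Theorem~\ref{reddisc}: since $\bar S$ is log terminal but, by hypothesis, contains no canonical singularities, the excluded dual graphs listed in Theorem~\ref{reddisc}(2) must be checked — a surface of Picard number one whose only singularities are of the specified chain types can be analyzed directly, and I expect that in the remaining genuinely non-canonical log terminal cases the minimal resolution $S$ does carry a redundant point, so one can perform a redundant blow-down in reverse: realize $S$ itself, or rather find a birational model, via redundant blow-ups from a simpler surface whose Cox ring is known to be finitely generated. Concretely, I would argue that when $P^2 = 0$ and $\bar S$ is log terminal of Picard number one without canonical singularities, $S$ is obtained by a sequence of redundant blow-ups from a smooth rational surface $S_0$ with $\kappa(-K_{S_0}) = \kappa(-K_S)$ which is either a big anticanonical surface or one of the explicitly understood surfaces (a relatively minimal rational elliptic surface, or a surface with an effective $k^*$-action as in the $\kappa = 0$ constructions), and then apply Theorem~\ref{redfg} to conclude finite generation ascends along redundant blow-ups. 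The exceptional clause in Theorem~\ref{redfg} ($\kappa = 0$ and $\mult_p N = 1$) would need to be ruled out for the redundant points appearing here, which is where the precise location statement of Theorem~\ref{redpt}(1) — redundant points sit at intersections of two $h$-contracted curves — together with the numerics of the log terminal chains becomes essential.

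The main obstacle, as I see it, is organizing the $P^2 = 0$ case into finitely many families and verifying for each that (i) the relevant minimal model $S_0$ has finitely generated Cox ring and (ii) the chain of blow-ups $S_0 \rightarrow S$ consists of redundant blow-ups avoiding the bad $\mult_p N = 1$, $\kappa = 0$ case, or, when it cannot avoid it, that finite generation still holds by an independent argument (e.g.\ the $k^*$-action trick of Knop \cite{Kn}). One clean way to package this is: by the classification of log terminal singularities of surfaces, the possible dual graphs of $g$-exceptional configurations are limited; the Picard-number-one and $-K_{\bar S}$-nef hypotheses pin down $\bar S$ tightly; in the $\kappa(-K_S) = 1$ subcase, I would appeal to the elliptic-fibration criterion (\cite[Theorem 4.2, Theorem 4.8, Corollary 5.4]{AL11}) after checking the Jacobian fibration of the induced elliptic fibration on $S$ has finite Mordell--Weil rank, which follows because $S$ has Picard number equal to $\rho(S) = \rho(\bar S) + \#\{\text{exceptional curves}\}$ accounted for entirely by fiber components and exceptional curves, forcing the Mordell--Weil group to be finite. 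Assembling these pieces — the big anticanonical input, the elliptic criterion, and the redundant blow-up ascent — yields finite generation of $\Cox(S)$ in all cases, completing the proof.
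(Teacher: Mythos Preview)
Your approach has a genuine gap and does not match the paper's direct argument.

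First, a structural correction: when $\bar S$ has Picard number one and $-K_{\bar S}$ is nef, the anticanonical class is either ample or numerically trivial; in the latter case $P = g^*(-K_{\bar S}) \equiv 0$, so $\kappa(-K_S)=0$. The regime $\kappa(-K_S)=1$ never occurs here, and your Mordell--Weil discussion is irrelevant.

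Second, and more seriously, in the remaining $\kappa(-K_S)=0$ case your bootstrapping plan is circular. Theorem~\ref{redfg} lets you \emph{ascend} finite generation along a redundant blow-up $S_0 \rightsquigarrow \widetilde S_0$; to apply it you would need a surface $S_0$ with $\rho(S_0)<\rho(S)$, known to be a Mori dream surface, such that $S$ is a redundant blow-up of $S_0$. You never produce such an $S_0$. Theorem~\ref{reddisc} locates redundant points \emph{on} $S$ (so that one can blow $S$ \emph{up}), which is the wrong direction; and even if you blow down some $(-1)$-curve on $S$, nothing guarantees the target has finitely generated Cox ring---indeed the surfaces $S$ of Theorem~\ref{minen} are themselves among the first known $\kappa=0$ examples, so there is no pre-existing base case to reduce to. The vague candidates you list (``a big anticanonical surface, a relatively minimal elliptic surface, a $k^*$-surface'') are not shown to dominate $S$ via redundant blow-downs, and in general they will not.

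The paper proceeds entirely differently and avoids redundant blow-ups. In the numerically trivial case it exploits the hypotheses directly: since $\bar S$ has no canonical singularities, every discrepancy satisfies $0<a_i<1$, so \emph{every} $g$-exceptional curve appears in $N=-K_S$. Writing any curve as $C=g^*(bH)+\sum b_i E_i$ (using $\rho(\bar S)=1$), the equation $1=-K_S\cdot C=\sum a_i(C\cdot E_i)$ with all $a_i>0$ leaves only finitely many nonnegative integer solutions for the $C\cdot E_i$, which then determine the $b_i$ via the (negative-definite) intersection matrix and $b$ via $C^2=-1$; thus there are finitely many $(-1)$-curves. The same equation with $0$ on the left forces any $(-2)$-curve into the support of $N$. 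Semiampleness of nef divisors is obtained by log abundance applied to the klt pair $(S, N+\epsilon M)$. This is where the ``no canonical singularities'' hypothesis is actually used---to make all $a_i$ strictly positive---not, as you suggest, to feed into the list in Theorem~\ref{reddisc}(2).
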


As in the proof of Theorem \ref{redfg}, we directly control possible extremal rays of the effective cone. For this purpose, we assume that $\bar{S}$ does not contain canonical singularities, and in fact, such an assumption is necessary (see Remark \ref{Ohashi}).

When the base field is $\C$, a normal projective surface  with at worst quotient singularities and the second topological Betti number $b_2 = 1$ is called a \emph{$\Q$-homology projective plane}. There are examples of rational $\Q$-homology projective planes satisfying the condition of Theorem \ref{minen} in \cite{HKex}, and they admit redundant blow-ups. Accordingly, we can obtain more examples of rational surfaces with $\kappa(-K)=0$ having finitely generated Cox rings, which are not Coble rational surfaces (see Remark \ref{ncob}).

The remainder of this paper is organized as follows. In Section \ref{setupsec}, we briefly recall basic properties of redundant blow-ups and Cox rings. Section \ref{classifisec} is devoted to the investigation of the classification of Mori dream rational surfaces by focusing on redundant blow-ups. In particular, we prove Theorem \ref{chbl}. In Section \ref{fgsec}, we prove Theorem \ref{redfg}. In Section \ref{exs}, we construct infinitely many new rational surfaces having finitely genrated Cox rings with anticanonical Iitaka dimension $0$ as well as $-\infty$, which supports Theorem \ref{exthm}. Finally, in Section \ref{qhpps}, we consider the finite generation of Cox rings of minimal resolutions of rational $\Q$-homology projective planes, and we prove Theorem \ref{minen}.

Throughout the paper, we work over an algebraically closed field $k$ of arbitrary characteristic.

\subsection*{Acknowledgements}
The authors would like to thank Junmyeong Jang for useful comments on the techniques in positive characteristic, Michela Artebani and Antonio Laface for sending them the revised paper \cite{AL11}, and Damiano Testa for interesting discussion. DongSeon Hwang also thanks Hisanori Ohashi for useful discussion around the examples in \cite{HKO}.  Jinhyung Park wishes to express his deep gratitude to his advisor Sijong Kwak for warm encouragement.
The authors wish to thank the anonymous referee for the careful reading and the valuable suggestions, especially for the simplification of the proof of Lemma \ref{bd}.

\section{Preliminaries}\label{setupsec}

In this section, we collect basic notions and useful facts.

\subsection{Redundant blow-ups}
Let $S$ be a smooth projective rational surface, and let $D$ be a $\Q$-divisor. The \emph{Iitaka dimension} of $D$ is given by $$\kappa(D) := \max \{\dim \varphi_{|-nD| (S)} : n \in \N\},$$
whose value is one of $2,1,0,$ and $-\infty$. We call $\kappa(-K_S)$ the \emph{anticanonical Iitaka dimension} of $S$. Note that $\kappa(-K_S) \geq 0$ if and only if $-K_S$ is pseudo-effective (\cite[Lemma 3.1]{Sak84}).
We will frequently use the notion of the \emph{Zariski decomposition} of a pseudo-effective $\Q$-divisor $D$ (see \cite[Section 2]{Sak84} for details): $D$ can be written uniquely as $P+N$, where $P$ is a nef $\Q$-divisor, $N$ is an effective $\Q$-divisor, $P.N=0$, and the intersection matrix of the irreducible components of $N$ is negative definite if $N \neq 0$.

Let $S$ be a smooth projective rational surface with $\kappa(-K_S) \geq 0$, and let $-K_S = P + N$ be the Zariski decomposition. Let $f: \widetilde{S} \to S$ be a blow-up at a point $p$ in $S$ with the exceptional divisor $E$.

\begin{definition}\label{reddef}
A point $p$ is called \emph{redundant} if $\mult_p N \geq 1$. The blow-up $f: \widetilde{S} \rightarrow S$ at a redundant point $p$ is called a \emph{redundant blow-up}, and the exceptional curve $E$ is called a \emph{redundant curve}.
\end{definition}

Note that we always have $\kappa(-K_{\widetilde{S}})  \leq \kappa(-K_S)$ in general. If $f$ is a redundant blow-up, then $\kappa(-K_{\widetilde{S}}) = \kappa(-K_S) \geq 0$ by \cite[Lemma 6.9]{Sak84} and the following lemma.

\begin{lemma}[{\cite[Corollary 6.7]{Sak84}}]\label{redlem}
Assume that $\kappa(-K_{\widetilde{S}}) \geq 0$ so the we have the Zariski decomposition  $-K_{\widetilde{S}}=\widetilde{P}+\widetilde{N}$. Then the following are equivalent:
\begin{enumerate}
 \item $f$ is a redundant point.
 \item $\widetilde{P}= f^{*}P$ and $\widetilde{N}= f^{*}N - E$.
\end{enumerate}
\end{lemma}

For more basic properties of redundant blow-ups, we refer to \cite{HP}.

\subsection{Cox rings of rational surfaces}

Let $S$ be a smooth projective surface with $q(S)=h^1(\mathcal{O}_S)=0$. We define the Cox ring of $S$ as $\bigoplus_{L \in \Pic(S)}H^0(S, L)$. Then we have the following geometric characterization when the Cox ring of $S$ is finitely generated.

\begin{theorem}[{\cite[Proposition 2.9]{HK00}} and {\cite[Corollary 1]{GM}}]
Let $S$ be a smooth projective surface with $q(S)=0$. Then the Cox ring of $S$ is finitely generated if and only if the following hold:
\begin{enumerate}
\item the effective cone $\Eff(S)$ is rational polyhedral, and
\item every nef divisor is semiample.
\end{enumerate}
\end{theorem}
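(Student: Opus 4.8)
The plan is to route the argument through the Hu--Keel theory of Mori dream spaces, after observing that on a surface this theory collapses to essentially conditions (1) and (2). Since $q(S)=0$, the identity component $\Pic^0(S)$ is a finite group scheme and $\Pic(S)/\Pic^0(S)=\operatorname{NS}(S)$ is finitely generated, so $\Pic(S)$ is a finitely generated abelian group; hence $\Cox(S)=\bigoplus_{D\in\Pic(S)}H^0(S,\mathcal O_S(D))$ is a well-defined $\Pic(S)$-graded ring, and, $S$ being smooth and therefore $\Q$-factorial, $S$ is a Mori dream space exactly when $\Cox(S)$ is finitely generated. Two surface-specific simplifications are crucial: first, a smooth projective surface admits no nontrivial small $\Q$-factorial modification, since a birational map between smooth surfaces that is an isomorphism in codimension one is an isomorphism; second, $\overline{\operatorname{Mov}}(S)=\Nef(S)$, because a divisor whose stable base locus is finite meets every irreducible curve non-negatively and conversely the ample cone is contained in the movable cone. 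Consequently the Hu--Keel criterion (\cite[Proposition 2.9]{HK00}) for $S$ to be a Mori dream space reduces to: $\Nef(S)$ is rational polyhedral and generated by finitely many semiample classes, and $\overline{\Eff}(S)$ is rational polyhedral.

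Next I would set up the dictionary between these cone conditions and (1)--(2) via the self-duality of a surface. The intersection form identifies $N^1(S)$ with $N_1(S)$, carrying $\overline{\Eff}(S)$ onto the cone of curves $\overline{\operatorname{NE}}(S)$, so $\Nef(S)$ is the dual cone $\Eff(S)^{\vee}$ whenever $\Eff(S)$ is closed; thus $\Eff(S)$ rational polyhedral forces $\Nef(S)$ rational polyhedral, and conversely $S$ a Mori dream space forces $\overline{\Eff}(S)$, hence $\Eff(S)$, rational polyhedral. For the semiampleness condition I would record that the semiample classes form a sub-semigroup of $\Nef(S)$: if $mA$ and $mB$ are base point free then $\mathcal O_S(m(A+B))=\mathcal O_S(mA)\otimes\mathcal O_S(mB)$ is globally generated, so $m(A+B)$ is base point free. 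Hence ``$\Nef(S)$ is generated by finitely many semiample classes'' is, given rational polyhedrality, the same as ``every nef $\Q$-divisor is semiample''. With this dictionary in hand the implication ``$\Cox(S)$ finitely generated $\Rightarrow$ (1) and (2)'' follows at once.

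The substantive half is the converse: assuming (1) and (2), build a finite generating set for $\Cox(S)$. By (1) there are finitely many prime divisors $E_1,\dots,E_r$ spanning $\Eff(S)$, and every irreducible curve of negative self-intersection is among them; the sections $x_{E_1},\dots,x_{E_r}\in\Cox(S)$ form the ``boundary'' part of the generating set. One then exploits the Zariski decomposition: $\Eff(S)$ is a finite union of rational polyhedral Zariski chambers on each of which the support of the negative part $N$ is a fixed subset of $\{E_1,\dots,E_r\}$ and the positive part $P$ ranges over a rational polyhedral subcone of $\Nef(S)$, where by (2) it is semiample; over such a chamber the relevant section ring is that of the semiample $P$ (adding the fixed $N$ does not change sections), hence finitely generated, and patching the finitely many chamber section rings with $x_{E_1},\dots,x_{E_r}$ gives finite generation of $\Cox(S)$. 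Alternatively one may simply quote the converse direction of \cite[Proposition 2.9]{HK00}, whose small-modification and movable-cone hypotheses are vacuous or met here, supplemented by \cite[Corollary 1]{GM} for arbitrary characteristic. The main obstacle is precisely this converse --- assembling the sections of all divisor classes at once into a finite package --- and it is exactly the finiteness of the Zariski chamber decomposition (which relies on (1) and on the negative definiteness of the negative curves) together with the semiampleness hypothesis (2) that makes this possible.
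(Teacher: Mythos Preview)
The paper does not give its own proof of this statement: it is quoted verbatim as a known result, with attribution to \cite[Proposition 2.9]{HK00} and \cite[Corollary 1]{GM}, and used as a black box throughout. There is therefore nothing to compare against in the paper itself.

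That said, your outline is essentially the standard derivation of the surface case from the Hu--Keel criterion, and it is sound. The two surface-specific collapses you isolate --- no nontrivial small $\Q$-factorial modifications, and $\overline{\operatorname{Mov}}(S)=\Nef(S)$ --- are exactly what reduces the general Mori dream space definition to conditions (1) and (2). Your Zariski-chamber argument for the converse is a correct sketch of the content of \cite{GM}; the only point worth tightening is the passage from ``each chamber section ring is finitely generated'' to ``$\Cox(S)$ is finitely generated'', which requires checking that the chamber rings, together with the $x_{E_i}$, generate all of $\Cox(S)$ and not merely a direct sum of subrings. This is precisely where Galindo--Monserrat do the work, so citing \cite[Corollary 1]{GM} at that step (as you do) is appropriate.
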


Such a surface is called a \emph{Mori dream surface}. The following lemma will be useful in proving that the effective cone is rational polyhedral. We remark that the proof in \cite{AL11} works for arbitrary characteristic, since the cone theorem still holds for positive characteristic by \cite[Theorem 4.4]{Ta}.

\begin{lemma}[{\cite[Corollary 2.2]{AL11}} and {\cite[Corollary 4.2]{LH}}]\label{cone}
Let $S$ be a smooth projective rational surface with $\kappa(-K_S) \geq 0$. Then $\Eff(S)$ is rational polyhedral if and only if there are only finitely many $(-1)$-curves and $(-2)$-curves on $S$.
\end{lemma}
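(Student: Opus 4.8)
The plan is to prove the two implications separately; the forward direction is routine and the content lies in the converse. For the forward direction I would use the standard fact that any irreducible curve $C$ with $C^{2}<0$ spans an extremal ray of $\overline{\Eff}(S)$: if $C\equiv\sum_{i}\lambda_{i}A_{i}$ with the $A_{i}$ effective and none proportional to $C$, write $A_{i}=a_{i}C+A_{i}'$ with $C\not\subseteq\operatorname{Supp}(A_{i}')$ and compare intersection numbers of both sides with $C$ and with an ample class to derive a contradiction. Hence if $\Eff(S)$ is rational polyhedral it has only finitely many extremal rays, so $S$ has only finitely many irreducible curves of negative self-intersection, in particular finitely many $(-1)$- and $(-2)$-curves.

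For the converse, assume $S$ has only finitely many $(-1)$- and $(-2)$-curves, and let $-K_{S}=P+N$ be the Zariski decomposition. The first step is to promote this hypothesis to the statement that $S$ has only finitely many negative curves: by adjunction, an irreducible curve $C$ with $C^{2}\le-3$ satisfies $(-K_{S})\cdot C=C^{2}+2-2p_{a}(C)\le C^{2}+2<0$, and since $P$ is nef this forces $N\cdot C<0$, so $C$ is a component of $N$; as $N$ has only finitely many components, there are only finitely many such $C$. Write $C_{1},\dots,C_{m}$ for all the negative curves of $S$. The second step is to identify the extremal rays of $\overline{\Eff}(S)=\overline{NE}(S)$. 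A big class is interior, so an extremal ray $R$ is spanned by a class $D$ with $D^{2}\le0$; writing the Zariski decomposition $D=P_{D}+N_{D}$ and using that for an extremal ray every expression of a generator as a sum of two classes of the cone has both summands on the ray, I find that either $P_{D}=0$, so $D$ lies in the cone generated by $C_{1},\dots,C_{m}$ and $R=\R_{\ge0}[C_{i}]$ for some $i$, or $N_{D}=0$, so $D$ is nef with $D^{2}=0$.

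The third step is the crux: bounding the nef square-zero extremal rays. Riemann--Roch on the rational surface $S$ gives $h^{0}(D)\ge1+\tfrac12(-K_{S})\cdot D\ge1$ for such a class $D$ (using $h^{2}(D)=h^{0}(K_{S}-D)=0$, which holds because $-K_{S}$ is nonzero and pseudo-effective while $D$ is nef), and from this one deduces that $D$ is proportional to the class $F$ of an irreducible curve with $F^{2}=0$, i.e.\ a fiber of a fibration $g\colon S\to\P^{1}$; by adjunction $F$ is either a conic-bundle fiber ($(-K_{S})\cdot F=2$) or an elliptic (or quasi-elliptic) fiber ($(-K_{S})\cdot F=0$). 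It then remains to bound the number of such fibrations. For conic bundles: if $\rho(S)=2$ then $S$ is a Hirzebruch surface with a unique ruling, while if $\rho(S)\ge3$ then each conic-bundle fiber class is a sum $[C']+[C'']$ of two $(-1)$-curves forming a singular fiber, so finiteness of the $(-1)$-curves bounds their number (alternatively, these rays are $K_{S}$-negative, so one may invoke the cone theorem, valid in all characteristics by \cite[Theorem 4.4]{Ta}). For elliptic fibrations: adjunction forces $(-K_{S})\cdot F=0$, hence $P\cdot F=0$; if $\kappa(-K_{S})=2$ then $P$ is big and nef and the Hodge index theorem gives $F^{2}<0$, a contradiction, so there are none; if $\kappa(-K_{S})=1$ then $P\ne0$ is nef with $P^{2}=0$, and $P\cdot F=0$ forces $F\propto P$ by Hodge index, so there is at most one; and if $\kappa(-K_{S})=0$ then $P\equiv0$ (otherwise $F\propto P$ would give $\kappa(-K_{S})=\kappa(P)=\kappa(F)\ge1$), so $-K_{S}\equiv N$, and $N\cdot F=0$ makes $N$ vertical for $g$ with no whole fiber in it (a whole fiber moves, forcing $\kappa(-K_{S})\ge1$), hence $N$ is supported on proper components of singular fibers, which are among $C_{1},\dots,C_{m}$, so there are only finitely many possibilities for $g$. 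Thus $\overline{NE}(S)$ has only finitely many extremal rays, and $\Eff(S)$ is rational polyhedral.

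The main obstacle is this last step: a priori the nef square-zero classes, even those orthogonal to $-K_{S}$, need not be locally finite, and pinning them down requires combining Riemann--Roch (so that $q(S)=0$ genuinely enters), the pseudo-effectivity of $-K_{S}$, and the classification of the relevant fibration structures according to the value of $\kappa(-K_{S})$; the earlier steps, by contrast, are soft consequences of adjunction, the Zariski decomposition, and elementary convex geometry.
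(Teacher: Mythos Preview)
The paper does not prove this lemma; it cites \cite[Corollary 2.2]{AL11} and only remarks that the argument there (which rests on \cite[Proposition 1.1]{AL11}, i.e.\ that for $\rho(S)\ge 3$ the cone $\Eff(S)$ is generated by the classes of negative curves, together with the cone theorem) carries over to positive characteristic via \cite[Theorem 4.4]{Ta}. Your approach is genuinely different: rather than invoking the structural result that negative curves generate $\Eff(S)$, you classify the extremal rays directly and bound each type. The forward direction, the reduction of negative curves to $(-1)$-, $(-2)$-curves and components of $N$, the conic-bundle rays (via the cone theorem), and the elliptic rays for $\kappa(-K_S)\in\{2,1\}$ are all handled correctly.

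The one genuine gap is in the $\kappa(-K_S)=0$ sub-case. After establishing $P\equiv 0$ and that $N=-K_S$ is vertical for the putative fibration $g$ with no whole fibre in its support, you write ``hence $N$ is supported on proper components of singular fibres, which are among $C_1,\dots,C_m$, so there are only finitely many possibilities for $g$''. As stated this is a non sequitur: knowing that the fixed divisor $N$ sits inside fibres of $g$ does not by itself bound the number of fibrations $g$. What is missing is the observation about $[F]$ itself, not about $N$: since every component $N_i$ of $N$ has $N_i^2<0$, the fibre of $g$ through $N_i$ cannot be irreducible (an irreducible fibre, even a multiple one, has square zero), so $g$ has a reducible fibre, say $F_0=\sum m_j C_{i_j}$ with each $C_{i_j}\in\{C_1,\dots,C_m\}$. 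Then $[F]=[F_0]$ is a nontrivial positive combination of classes of negative curves, none of which is proportional to $F$ (their squares are negative while $F^2=0$), so $[F]$ does not span an extremal ray at all. In other words, in this case there are \emph{no} nef square-zero $K$-trivial extremal rays, which is stronger than ``finitely many $g$''. With this correction your direct argument goes through and gives an alternative to the route via \cite[Proposition 1.1]{AL11}; the Artebani--Laface approach has the advantage of treating all values of $\kappa(-K_S)$ uniformly, whereas yours trades that uniformity for avoiding the general structure theorem on $\Eff(S)$.
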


The following lemma will be useful in checking that a nef divisor is semiample.

\begin{lemma}[{\cite[Lemma 3.1]{LT11}}]\label{nefsemiample}
Let $S$ be a smooth projective rational surface, and let $M$ be a nef divisor on $S$. If $-K_S.M>0$, then $M$ is semiample.
\end{lemma}

\section{Classification of Mori dream rational surfaces via redundant blow-ups}\label{classifisec}

In this section, we investigate the classification problem of Mori dream rational surfaces with $\kappa(-K) \geq 0$ via redundant blow-ups, and we prove Theorem \ref{chbl}. Let $S$ be a smooth projective rational surface with $\kappa(-K_S) \geq 0$.

\subsection{$\kappa(-K_S)=2$}

Recall that $S$ is always a Mori dream rational surface by \cite[Theorem 1]{TVV10} or \cite[Theorem 3]{CS08}. Moreover, we have the following classification result.

\begin{theorem}[{\cite[Proposition 4.2 and Theorem 4.3]{Sak84}}]\label{Sakai}
Every big anticanonical rational surface can be obtained by a sequence of redundant blow-ups from the minimal resolution of a del Pezzo surface with rational singularities.
\end{theorem}

For more basic properties of big anticanonical rational surfaces, we refer to \cite{Sak84} and \cite{HP}.

\subsection{$\kappa(-K_S)=1$}

In characteristic zero, we have the classification result as follows. By \cite[Theorem 3.4 and Theorem 4.8]{AL11}, every Mori dream rational surface with anticanonical Iitaka dimension $1$ can be obtained by a sequence of blow-ups of type (1) or (2) in Lemma \ref{1red} from a relatively minimal rational elliptic surface whose Jacobian fibration has a finite Mordell-Weil group. Note that by the Ogg-Shafarevich theory, every such rational elliptic surface can be obtained from extremal rational elliptic surfaces, which are completely classified in \cite{MP86}. 

\begin{theorem}[{\cite[Lemma 4.4]{AL11}}]\label{1red}
Let $f \colon \widetilde{S} \rightarrow S$ be the blow-up at a point $p$ in a smooth projective rational surface $S$ with $\kappa(-K_S)=1$. Then $\kappa(-K_{\widetilde{S}})=\kappa(-K_S)$ if and only if one of the following hold:
 \begin{enumerate}
	\item $f$ is a redundant blow-up, or
	\item $\mult_p N<1$ but $\mult_p (-K_S) > 1$.
 \end{enumerate}
For the second case, we have $\widetilde{P}=\frac{\mult_p(-K_X)-1}{\mult_p P}f^{*}P$ and $\widetilde{N}=\frac{1-\mult_p N}{\mult_p P}f^{*}P + f^{*}N - E$, where
$-K_{\widetilde{S}}=\widetilde{P} + \widetilde{N}$ and $-K_S = P+N$ are the Zariski decompositions, and $E$ is the exceptional divisor of $f$.
\end{theorem}

\begin{proof}
We note that the proof in \cite{AL11} works for arbitrary characteristic.
\end{proof}

See \cite[Definition 4.3]{AL11} for well-definedness of $\mult_p P$ and $\mult_p(-K_S)$. The Mori dream rational surface $S$ with $\kappa(-K_S)=1$ in Example \ref{-infty} admits infinitely many redundant blow-ups. In \cite[Proposition 4.6]{AL11}, Artebani and Laface constructed an infinite sequence of blow-ups of type (2) in Lemma \ref{1red}.

Furthermore, we can also prove that the blow-ups of type (1) and (2) of Lemma \ref{1red} preserve the finite generation of Cox rings. The following theorem was known in characteristic zero by \cite[Theorem 4.8]{AL11}.

\begin{theorem}\label{1bwfg}
Let $S$ be a Mori dream rational surface, and let $f \colon \widetilde{S} \rightarrow S$ be the blow-up at a point $p$. If $\kappa(-K_{\widetilde{S}})=\kappa(-K_S)=1$, then $\widetilde{S}$ is also a Mori dream rational surface.
\end{theorem}

We will prove this theorem for all characteristic in Section \ref{fgsec}.

\subsection{$\kappa(-K_S)=0$}
No classification theory is previously given in this case, but we present a characterization of blow-ups preserving the finite generation of Cox rings. The following theorem immediately implies Theorem \ref{chbl}.

\begin{theorem}
Let $S$ be a Mori dream rational surface with $\kappa(-K_S)=0$, and let $f \colon \widetilde{S} \rightarrow S$ be a blow-up at a point. Suppose that $\widetilde{S}$ is also a Mori dream rational surface. Then $\kappa(-K_{\widetilde{S}})=\kappa(-K_{S})$ if and only if $f$ is a redundant blow-up.
\end{theorem}

\begin{proof}
If $f$ is a redundant blow-up, then $\kappa(-K_{\widetilde{X}})=0$ by Lemma \ref{redlem}. Conversely, assume that $\kappa(-K_{\widetilde{S}})=0$. Let $-K_S=P+N$ be the Zariski decomposition. Since $\kappa(-K_S)=\kappa(P)=0$ and the nef divisor $P$ on a Mori dream rational surface $S$ is semiample, we have $P=0$. Let $E$ be the exceptional divisor of $f$. Then for the Zariski decomposition $-K_{\widetilde{S}} = \widetilde{P} + \widetilde{N}$, we have $\widetilde{P}=0$ and $\widetilde{N}=-K_{\widetilde{S}}= f^{*} (N) - E$. By Lemma \ref{redlem}, $f$ is a redundant blow-up.
\end{proof}

In view of Theorem \ref{chbl}, we need to classify Mori dream rational surfaces $S$ with $\kappa(-K_S)=0$ such that for every blow-down $S \to S'$, we have $\kappa(-K_{S'}) \geq 1$. To understand the structure of Mori dream rational surfaces with $\kappa(-K)=0$, we construct many new examples of such surfaces and redundant blow-ups (see Sections \ref{exs} and \ref{qhpps}).

We have seen that there is a tendency of morphisms between Mori dream rational surfaces with the same anticanonical Iitaka dimension $\kappa(-K) \geq 0$ to preserve the positive part of the Zariski decomposition of the anticanonical divisor when $\kappa(-K)$ decreases, and finally, there are only redundant blow-ups for the case $\kappa(-K)=0$. In this viewpoint, we expect that morphisms between Mori dream rational surfaces with $\kappa(-K)=-\infty$ are very rare. It is an interesting problem to characterize such morphisms.

\section{Finite generation of Cox rings under redundant blow-ups}\label{fgsec}

In this section, we prove Theorem \ref{redfg} and Theorem \ref{1bwfg}. First, we prove some useful lemmas.

\begin{lemma}\label{elfib}
Let $S$ be a smooth projective surface with $q(S)=0$, and let $M$ be an effective base point free divisor on $S$ such that $M^2 =0$ and $M.K_S = 0$. If $M$ is nontrivial, then the linear system $|M|$ induces an elliptic (or quasi-elliptic) fibration $\varphi_{|M|} \colon S \rightarrow \P^1$.
\end{lemma}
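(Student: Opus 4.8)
The plan is to argue that $\varphi_{|M|}$ is a morphism to a curve with connected fibers, then show the curve must be $\P^1$ and that the generic fiber is an irreducible curve of arithmetic genus one. Since $M$ is base point free, $\varphi := \varphi_{|M|}$ is a morphism; let $Y$ be its image. Because $M^2 = 0$ and $M$ is nontrivial and nef, $M$ is not big, so $\dim Y \leq 1$, and since $M$ is nontrivial $\dim Y = 1$. After Stein factorization we may assume $\varphi \colon S \to Y$ has connected fibers over a smooth curve $Y$; replacing $M$ by the pullback of a very ample divisor on $Y$ (which only rescales $M$ and does not affect the conclusion), we may assume the fibers of $\varphi$ are exactly the members of a sub-pencil, so a general fiber $F$ satisfies $F^2 = 0$ and $F$ is numerically proportional to $M$.

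Next I would identify $Y$ and the fiber genus. Since $q(S) = 0$ and $S$ is (as one checks, or as follows from $q=0$ together with $\kappa$ considerations in the intended application — but here we only need $q(S)=0$) we have $H^1(S,\mathcal{O}_S) = 0$, hence $h^1(Y,\mathcal{O}_Y) \leq h^1(S,\mathcal{O}_S) = 0$ by the Leray spectral sequence (the map $H^1(Y,\mathcal O_Y)\to H^1(S,\mathcal O_S)$ is injective), so $Y \cong \P^1$. For the fiber, adjunction on the general fiber $F$ gives $2p_a(F) - 2 = F^2 + F.K_S = 0 + F.K_S$. Since $F$ is numerically a positive multiple of $M$ and $M.K_S = 0$, we get $F.K_S = 0$, hence $p_a(F) = 1$. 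A general fiber is therefore a connected curve of arithmetic genus one; if it is smooth this is an elliptic fibration, and in characteristic $2$ or $3$ it may happen that the general fiber is a singular (cuspidal) rational curve, giving a quasi-elliptic fibration. In either case $\varphi_{|M|} \colon S \to \P^1$ is an elliptic or quasi-elliptic fibration, as claimed.

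The main subtlety I expect is the reduction allowing us to treat $M$ itself (rather than merely a multiple of it) as cut out by fibers, i.e.\ checking that no multiple-fiber or base-change issue obstructs the statement ``$|M|$ induces the fibration'': one must make sure that the fibers of $\varphi_{|M|}$ really are the connected fibers, which uses that after Stein factorization the map to $\P^1$ is given by a complete linear system on $\P^1$ pulled back, together with $M^2 = 0$ forcing $M$ to be (a positive multiple of) a fiber class. A secondary point requiring care is the positive-characteristic case, where one should invoke that a base-point-free pencil of arithmetic-genus-one curves on a surface with $q = 0$ yields precisely an elliptic or quasi-elliptic fibration — quasi-elliptic occurring only when $\cha k \in \{2,3\}$ — rather than assuming the general fiber is smooth. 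Both points are standard, so the proof is essentially an assembly of adjunction, the computation $p_a(F)=1$, and $q(S)=0 \Rightarrow Y = \P^1$.
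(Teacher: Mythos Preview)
Your overall strategy matches the paper's: map to a curve via $|M|$, use Stein factorization and $q(S)=0$ with the Leray spectral sequence to identify the base as $\P^1$, then apply adjunction to the general fiber to get arithmetic genus one. The gap is precisely the point you flag as a ``subtlety'' and then dismiss as standard: showing that $\varphi_{|M|}$ \emph{itself} has connected fibers over its image $B$, rather than only the Stein factorization $f \colon S \to B' \simeq \P^1$. Your sentence ``replacing $M$ by the pullback of a very ample divisor on $Y$ \ldots\ does not affect the conclusion'' is exactly where this breaks: the conclusion of the lemma is about $\varphi_{|M|}$, not about $\varphi_{|F|}$ for the fiber class $F$, so this replacement is not innocuous.

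The paper closes this with a short but genuine argument. Writing $a = \deg(g)\cdot \deg_{\P^n}(B)$ for the finite part $g\colon B'\to B\subset\P^n$ of the Stein factorization, one has $M = aD$ with $\mathcal{O}_S(D)=f^*\mathcal{O}_{\P^1}(1)$, so $n+1 = h^0(M) = h^0(aD) \geq a+1$; on the other hand non-degeneracy of $B\subset \P^n$ gives $a \geq \deg_{\P^n}(B) \geq n$. Hence $a = \deg_{\P^n}(B) = n$, so $B$ is a curve of minimal degree, the classification of such forces $g$ to be a Veronese embedding, and $\varphi_{|M|}\colon S\to B\simeq\P^1$ already has connected fibers. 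A second point the paper treats and you elide: in positive characteristic one must still argue that the general fiber is \emph{integral} (not merely connected of arithmetic genus one) before invoking the elliptic/quasi-elliptic dichotomy; the paper obtains this from the equivalence $(\varphi_{|M|})_*\mathcal{O}_S = \mathcal{O}_{\P^1} \Leftrightarrow k(\P^1)$ algebraically closed in $k(S)$, together with a citation to B\u{a}descu for integrality of all but finitely many fibers.
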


\begin{proof}
We slightly modify \cite[Proof of Proposition 1.3]{AL11} to make it work for arbitrary characteristic. Since $M$ is not big and is base point free, we have $\kappa(M)=1$. Thus the morphism $\varphi_{|M|} \colon S \rightarrow B \subset \P^n$ maps to an integral curve. Consider the Stein factorization
\[
\xymatrix{
S \ar[rr]^{\varphi_{|M|}} \ar[rd]_f && B\\
&B' \ar[ru]_g &}
\]
where $B'$ is an integral curve, $f$ is a contraction, and $g$ is a finite morphism. We have the Leray spectral sequence
$$E_2^{p,q}=H^p(B', R^q f_{*} \mathcal{O}_S) \Rightarrow H^{p+q}(S, \mathcal{O}_S).$$
Since the edge homomorphism $H^1(B', \mathcal{O}_{B'})=E_2^{1,0} \rightarrow H^1(S, \mathcal{O}_S)$ is injective, we obtain $0=q(S) \geq p_a(B')$. Thus $B' \simeq \P^1$.  Let $a:=\deg(g) \deg_{\P^n}(B)$. Then $M = aD$, where $\mathcal{O}_S (D) = f^{*} \mathcal{O}_{\P^1} (1)$. Since $h^0 (D) \geq 2$, we have
$$n+1 = h^0(M) = h^0(aD) \geq a+1.$$
On the other hand, since the embedding $B\subset \P^n$ is non-degenerate, $a \geq \deg_{\P^n}(B) \geq n$. Thus $a=\deg_{\P^n}(B) = n$, and hence, $B$ is a curve of minimal degree. By the classification of varieties of minimal degree (see \cite[Theorem 1]{EH}), we conclude that $g$ is a Veronese embedding, and hence, we get a fibration $\varphi_{|M|} \colon S \rightarrow B \simeq \P^1$ with connected fibers. Note that the condition $(\varphi_{|M|})_* \mathcal{O}_S = \mathcal{O}_{\P^1}$ is equivalent to the function field $k(\P^1)$ being algebraically closed in $k(S)$. By \cite[Corollary 7.3]{B01}, all but finitely many fibers of $\varphi_{|M|}$ are integral curves. Now, the assertion immediately follows from the adjunction formula.
\end{proof}

The following is an important technical ingredient of the proof of Theorem \ref{redfg}.

\begin{lemma}\label{bd}
Let $S$ be a smooth projective surface such that the nef cone $\Nef(S)$ is a rational polyhedral cone. Then for any fixed integer $r \geq 0$, there is only a finite number of classes $[C] \in \Eff(S)$ such that $C^2 = r^2 -1, K_S .C = -r-1,$ and $C$ is an integral curve.
\end{lemma}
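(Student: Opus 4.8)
The plan is to bound the relevant curve classes by combining numerical constraints with the two defining properties of a Mori dream surface: the effective cone is rational polyhedral, and every nef divisor is semiample. First I would record that an integral curve $C$ with $C^2 = r^2-1$ and $K_S.C = -r-1$ satisfies, by adjunction, $2p_a(C)-2 = C^2 + K_S.C = r^2 - r - 2$, so $p_a(C) = \binom{r}{2}$; in particular for $r=0$ we get $C^2=-1$, $K_S.C=-1$, i.e. $C$ is a $(-1)$-curve, and there are only finitely many of those on a Mori dream surface by Lemma \ref{cone}. So I may assume $r \geq 1$, and then $C^2 = r^2 - 1 \geq 0$. The idea is to show that such a $C$ is (up to finitely many exceptions) semiample and moves in a base-point-free pencil, and that the pencil is essentially unique, so that there can be only finitely many such classes.

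The key steps, in order: (1) Using $C^2 \geq 0$, show $C$ is nef, or at least that after subtracting finitely many fixed negative curves the class becomes nef; since $S$ is a Mori dream surface, nef implies semiample, so some multiple $mC$ is base-point free. (2) Analyze the Iitaka dimension of $C$. If $C^2 = r^2-1 > 0$, i.e. $r \geq 2$, then $C$ is big and nef, and I would argue via boundedness of the anticanonical-type numerical data together with rational polyhedrality of $\Eff(S)$ that only finitely many classes with the prescribed intersection numbers can occur — here Riemann–Roch gives $h^0(C) = \chi(\mathcal{O}_S) + \frac{1}{2}(C^2 - K_S.C) = 1 + \frac{1}{2}(r^2-1+r+1) = 1 + \binom{r+1}{2}$, and one controls the family. (3) The borderline case $C^2 = 0$, forcing $r = 1$, $K_S.C = -2$: then by step (1) $C$ is semiample with $C^2 = 0$, and since $K_S.C = -2 \neq 0$ one cannot directly invoke Lemma \ref{elfib}, but $|mC|$ still defines a fibration $S \to \P^1$ whose general fiber $F$ satisfies $F^2 = 0$; comparing $C$ with $F$ shows $C$ is a (multiple of a) fiber component, and there are only finitely many reducible fibers, hence finitely many such classes. (4) Assemble: finitely many negative curves can be subtracted, finitely many fibration structures arise, and each contributes finitely many classes.

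The main obstacle I expect is step (1)–(3) for the case $C^2 = 0$: an integral curve of arithmetic genus $1$ with $C^2 = 0$ need not be nef a priori (it could meet a $(-1)$-curve or $(-2)$-curve), and once one passes to the induced fibration the class $C$ itself need not be a full fiber — it could be a multiple section-free component, and $K_S.C = -2$ with $C^2 = 0$ is exactly the numerics of a smooth rational $(-2)$... no, rather of a configuration that can sit inside a fiber. Controlling how $[C]$ sits relative to the finitely many fibration structures on $S$ (each coming from a primitive class on an extremal ray of $\Nef(S)$, of which there are finitely many since $\Nef(S) = \Eff(S)^\vee$ is rational polyhedral) is the delicate bookkeeping. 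I would handle it by noting that a semiample class with self-intersection zero is proportional to one of the finitely many primitive nef classes spanning the "parabolic" boundary rays, and for each such ray only boundedly many integral representatives with fixed $K_S.C$ exist because they are supported on fibers of a fixed fibration.
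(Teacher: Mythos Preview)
Your $r=0$ case matches the paper. Your approach to $r=1$ is correct and in fact cleaner than the paper's. An integral curve with $C^2=0$ is automatically nef (no need to subtract anything), hence semiample on a Mori dream surface; by the Hodge index theorem any nef class of square zero spans an extremal ray of the polyhedral cone $\Nef(S)$ (if $C=D_1+D_2$ with $D_i$ nef, then $0=C^2$ forces $D_1^2=D_2^2=D_1.D_2=0$, and a two--dimensional isotropic subspace is impossible in signature $(1,\rho-1)$), so $[C]$ lies on one of finitely many rays, and $-K_S.C=2$ fixes the scaling on each ray. Your worry in the final paragraph is therefore unfounded: $[C]$ is proportional to the full fiber class of the induced fibration, not merely a component, and the phrase ``finitely many reducible fibers'' in your step (3) is a red herring. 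By contrast, the paper writes $C$ as a $\Q$-combination of $(-1)$- and $(-2)$-curves, bounds the $(-1)$-coefficients from $-K_S.C=2$, and then runs an elaborate contradiction argument---producing an auxiliary elliptic fibration from the $(-2)$-part via Lemma~\ref{elfib}---to bound the $(-2)$-coefficients. Your route is shorter and uses only the polyhedrality of $\Nef(S)$.

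The genuine gap is your step (2) for $r\geq 2$. Computing $h^0(C)$ by Riemann--Roch does not bound $[C]$ in $N^1(S)_{\R}$: a fixed value of $h^0$ is compatible with infinitely many numerical classes, and ``rational polyhedrality of $\Eff(S)$'' by itself gives no bounded region inside the (unbounded) nef cone. You need an inequality that actually cuts the cone down. The paper's argument here is short and is precisely the missing idea: since $C$ is nef and big ($C^2=r^2-1>0$), write $C=\sum_i a_i A_i$ with $a_i\in\Q_{\geq 0}$ and $A_1,\ldots,A_k$ integral generators of $\Nef(S)$; by the Hodge index theorem $C.A_i>0$ for each nontrivial $A_i$, hence $C.A_i\geq 1$ by integrality, and then
\[
r^2-1 \;=\; C^2 \;=\; \sum_i a_i\,(A_i.C) \;\geq\; \sum_i a_i,
\]
which bounds all the coefficients. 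Once you insert this, your plan is complete.
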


\begin{proof}
Since $\Eff(S)$ is dual to $\Nef(S)$, there are only finitely many ($-1$)-curves. Thus we assume that $r \geq 1$. 

First, we consider the case $r=1$. Let $C$ be an integral curve with $C^2=0$. We claim that $[C]$ generates an extremal ray of the nef cone $\Nef(S)$. Suppose that $[C] = x+y$ with $x, y \in \Nef(S)$. We have
$$0=C^2 = x^2 + 2x.y+y^2.$$
Since $x$ and $y$ are nef classes, it follows that $x^2=x.y=y^2=0$. Let $A$ be an ample divisor on $S$. We denote by $a:=[A].x$ and $b:=[A].y$. We have $[A].(bx-ay)=0$ and $(bx-ay)^2=0$. By the Hodge index theorem, $x=y$. We have shown the claim. Note that there are at most one class $u$ in each extremal ray of $\Nef(S)$ such that $[K_S].u=-2$. Since we assume that $\Nef(S)$ is rational polyhedral, there are finitely many classes $[C]\in \Eff(S)$ such that $C^2 = 0, K_S .C = -2,$ and $C$ is an integral curve.

It remains to consider the case $r \geq 2$. Let $C$ be an integral curve with $C^2=r^2-1$.
Since $C^2 =r^2-1> 0$, it follows that $C$ is nef and big. Let $A_1, \ldots, A_k$ be nontrivial $\Z$-divisors generating the nef cone $\Nef(S)$. We may write $C = \sum_i a_i A_i$, where $a_i \in \Q_{\geq 0}$ for every $i$. It suffices to show that every class $[C]$ lies in some bounded region in the N\'{e}ron-Severi space $N^1 (S)_{\R}:=N^1(S) \otimes \R$, because $[C]$ is an integral point. By the Hodge index theorem, $A_i.C \geq 1$ for every $i$. We have
$$ r^2 -1= C^2 = \sum a_i A_i.C \geq \sum a_i,$$
and hence, every $a_i$ is bounded. Thus $[C]$ lies in some bounded region in $N^1 (S)_{\R}$.
\end{proof}

Now, we present a proof of Theorem \ref{1bwfg}.

\begin{proof}[Proof of Theorem \ref{1bwfg}]
Let $-K_S = P+N$ and $-K_{\widetilde{S}}=\widetilde{P} + \widetilde{N}$ be the Zariski decompositions.
By Lemma \ref{elfib}, for some integer $m>0$, the linear system $|m \widetilde{P}|$ induces a fibration $\varphi \colon \widetilde{S} \rightarrow \P^1$. First, we show that the effective cone $\Eff(\widetilde{S})$ is rational polyhedral. By Lemma \ref{cone}, it is sufficient to show that there are only finitely many ($-1$)-curves and ($-2$)-curves on $\widetilde{S}$. Let $\widetilde{C}$ be a ($-2$)-curve. Since $0=-K_{\widetilde{S}}.\widetilde{C}=\widetilde{P}.C + \widetilde{N}.C$, the curve $\widetilde{C}$ is contained in either a reducible fiber of $\varphi$ or the support of $\widetilde{N}$. Thus $\widetilde{S}$ contains finitely many ($-2$)-curves.

Let $\widetilde{C}$ be a ($-1$)-curve on $\widetilde{S}$ different from the exceptional divisor $E$ of $f$. In this case, we use an idea from \cite[Proof of Theorem 4.8]{AL11}. By \cite[Theorem 3.4]{Sak84}, there exists a birational morphism $g \colon S \rightarrow S_0$ such that $S_0$ is a relatively minimal model of the fibration $\varphi_{|P|} \colon S \rightarrow \P^1$. Then $-K_{S_0} = P_0$ is the Zariski decomposition. By Lemma \ref{1red}, $\widetilde{P}=\alpha f^*g^* P_0$, where $\alpha$ is a rational number with $0 < \alpha <1$. We have $-K_{\widetilde{S}}.\widetilde{C}=1$. Then either $\widetilde{C}$ is a component of $N$, a reducible fiber of $\varphi$, or $0< \widetilde{C}.\widetilde{P} \leq 1$. For the first two cases, there are only finitely many possibilities. Thus assume that we are in the third case. We can further assume that $\widetilde{C}$ is not a component of the exceptional divisor of $g \circ f \colon \widetilde{S} \rightarrow S_0$. We have
$$r=\widetilde{C}.E \leq \widetilde{C}.(f^{*}g^{*}(-K_{S_0}) + K_{\widetilde{S}}) = \frac{1}{\alpha}\widetilde{C}.\widetilde{P} -1 \leq \frac{1}{\alpha} -1,$$
i.e., $r$ is bounded. Now, Lemma \ref{bd} implies that $\widetilde{S}$ contains only finitely many ($-1$)-curves.

Since the proof of \cite[Theorem 3.4]{AL11} works for arbitrary characteristic, we can conclude that every nef divisor on $\widetilde{S}$ is semiample. Alternatively, we can directly show the semiampleness of a nef divisor using Lemma \ref{nefsemiample}. Therefore, $\widetilde{S}$ is a Mori dream rational surface.
\end{proof}

We are ready to prove Theorem \ref{redfg}.

\begin{proof}[Proof of Theorem \ref{redfg}]
Let $S$ be a Mori dream rational surface, and let $f \colon \widetilde{S} \rightarrow S$ be a redundant blow-up at a point $p$ with the exceptional divisor $E$. Note that $\kappa(-K_{\widetilde{S}})=\kappa(-K_S)$. Since every big anticanonical rational surface is always a Mori dream space, there is nothing to prove for the case $\kappa(-K_S)=2$. The case $\kappa(-K_S)=1$, the assertion is a subcase of Theorem \ref{1bwfg}. Thus it only remains the case $\kappa(-K_{S})=0$.

Let $-K_{\widetilde{S}} = \widetilde{P} + \widetilde{N}$ and $-K_S = P + N$ be the Zariski decompositions. We have $P=0$ since $P$ is semiample and $\kappa(P)=0$. By Lemma \ref{redlem}, $\widetilde{P}=f^{*}P=0, \text{ and hence, } -K_{\widetilde{S}}=\widetilde{N}=f^{*}N-E$. By the assumption $m:=\mult_p N > 1$, we have
$$\widetilde{N} = f^{-1}_{*} N + (m-1)E,$$
i.e., the redundant curve $E$ is contained in the support of $\widetilde{N}$.

First, we show that the effective cone $\Eff(\widetilde{S})$ is rational polyhedral. By Lemma \ref{cone}, it is sufficient to show that there are only finitely many ($-1$)-curves and ($-2$)-curves on $\widetilde{S}$. Note that every integral curve on $\widetilde{S}$ different from $E$ can be written as $\widetilde{C}=f^{*}C - rE$, where $C$ is an integral curve on $S$ and $r \geq 0$ is an integer. Note that $C^2 =r^2 -1$ and $K_S.C = -r-1$. Let $\widetilde{C}=f^{*}C - rE$ be a $(-2)$-curve which is not in the support of $\widetilde{N}$. Then we have
$$0=-K_{\widetilde{S}}.\widetilde{C} =\widetilde{N}.\widetilde{C}= (f^{-1}_{*} N + (m-1)E). \widetilde{C} \geq (m-1)E. \widetilde{C},$$
and hence, $E.\widetilde{C} = 0$. Since $E.\widetilde{C}=r$, we obtain $r=0$, i.e., $C$ is a ($-2$)-curve. Thus there are finitely many ($-2$)-curves on $\widetilde{S}$.

Let $\widetilde{C}=f^{*}C-rE$ be a ($-1$)-curve not in the support of $\widetilde{N}$. Then we have
$$1=-K_{\widetilde{S}}. \widetilde{C} = \widetilde{N}. \widetilde{C} = (f^{-1}_{*} N + (m-1)E). \widetilde{C} \geq (m-1)E. \widetilde{C} = (m-1)r,$$
and hence, $r \leq \frac{1}{m-1}$, i.e., $r$ is bounded. By Lemma \ref{bd}, there are only finitely many ($-1$)-curves on $\widetilde{S}$.

Finally, we show that every nef divisor $\widetilde{M}$ on $\widetilde{S}$ is semiample. By Lemma \ref{nefsemiample}, any nef divisor $\widetilde{M}$ with $-K_{\widetilde{S}}.\widetilde{M}>0$ is semiample. Since $-K_{\widetilde{S}}$ is effective, we may assume that $-K_{\widetilde{S}}.\widetilde{M} = 0$. We have
$$0=-K_{\widetilde{S}}.\widetilde{M}=\widetilde{N}. \widetilde{M} =  (f^{-1}_{*} N + (m-1)E). \widetilde{M} \geq (m-1)E. \widetilde{M},$$
and hence, we obtain $E.\widetilde{M}=0$. We may write $\widetilde{M} = f^{*}M - rE$ for some divisor $M$ on $S$. Then $r=E.\widetilde{M}=0$. For any effective curve $C$ on $S$, we have
$$M.C = f^* M. f^* C  = \widetilde{M}.f^{*}C \geq 0,$$
and hence, $M$ is also nef on a Mori dream surface $S$; therefore, it is semiample. Thus $\widetilde{M}=f^{*}M$ is also semiample.
\end{proof}

\begin{remark}
When $\kappa(-K_{S})=0$, the exceptional curve $E$ appearing in the support of $\widetilde{N}$ plays a crucial role in proving Theorem \ref{redfg}. If $E$ does not appear in the support of $\widetilde{N}$, then some nef divisor on $\widetilde{S}$ is not semiample and $\Eff(\widetilde{S})$ is not rational polyhedral (see Example \ref{nonmdrsex} and Remark \ref{nonmdrsrmk}). This observation naturally raise the question of which additional conditions guarantee that the redundant blow-up preserves the finite generation of Cox rings.
\end{remark}

\section{Examples of Mori dream rational surfaces with $\kappa(-K) \leq 0$}\label{exs}

In this section, we first construct Mori dream rational surfaces with $\kappa(-K) \leq 0$ admitting effective $k^{*}$-action. In particular, we prove Theorem \ref{exthm}. For generalities on $k^{*}$-surfaces, see \cite{OW77} and \cite{HS10}.

\begin{example}\label{-infty}
We can give an effective $k^{*}$-action on $\P^2 = \Proj k[x,y,z]$ by $t \cdot (x,y,z) = (t x, t y , z)$ for $t \in k^{*}$. Consider $k^{*}$-invariant lines $l_1 = V(x), l_2 = V(y), l_3 = V(x+y)$, and $L_1 = V(x-y), L_2 = V(z)$ in $\P^2$. Let $\phi \colon S \rightarrow \P^2$ be the blow-up at the base points of the cubic pencil determined by $xy(x+y)$ and $(x-y)z^2$. Then we get an elliptic fibration $\pi \colon S \rightarrow \P^1$ with two singular fibers of the same type $\widetilde{D}_4$. Throughout this example, we use the same notations for strict transforms of irreducible divisors.

\begin{tikzpicture}[line cap=round,line join=round,>=triangle 45,x=1.0cm,y=1.0cm]
\clip(-3.9,-0.5) rectangle (8.5,5.3);
\draw (-3,4.3)-- (-3,0);
\draw (1,4.3)-- (1,0);
\draw (-3.46,3.38)-- (-1.34,4.3);
\draw [dash pattern=on 2pt off 2pt] (-2.56,4)-- (0.44,4);
\draw [dash pattern=on 2pt off 2pt] (-2.56,2.98)-- (0.44,2.98);
\draw [dash pattern=on 2pt off 2pt] (-2.54,1.98)-- (0.46,1.98);
\draw [dash pattern=on 2pt off 2pt] (-2.56,1)-- (0.44,1);
\draw (-3.44,2.4)-- (-1.32,3.32);
\draw (-3.44,1.38)-- (-1.32,2.3);
\draw (-3.44,0.4)-- (-1.32,1.32);
\draw (-0.58,4.28)-- (1.54,3.34);
\draw (-0.56,3.28)-- (1.56,2.34);
\draw (-0.58,2.26)-- (1.54,1.32);
\draw (-0.56,1.26)-- (1.56,0.32);
\draw (-2.2,0.85) node[anchor=north west] {$q$};
\draw (-0.2,0.85) node[anchor=north west] {$p$};
\draw (0.7,4.9) node[anchor=north west] {$L_2$};
\draw (1.5,0.49) node[anchor=north west] {$L_1$};
\draw (1.5,1.49) node[anchor=north west] {$E_3$};
\draw (1.5,2.49) node[anchor=north west] {$E_2$};
\draw (1.5,3.49) node[anchor=north west] {$E_1$};
\draw (-3.9,3.6) node[anchor=north west] {$l_1$};
\draw (-3.9,2.6) node[anchor=north west] {$l_2$};
\draw (-3.9,1.6) node[anchor=north west] {$l_3$};
\draw (-3.09,1.25) node[anchor=north west] {$E$};
\draw (2.8,2) node[anchor=north west] {$\longrightarrow$};
\draw (3.01,2.63) node[anchor=north west] {$\phi$};
\draw (5,1)-- (8,4);
\draw (5,3)-- (8,0);
\draw (5,2)-- (8,2);
\draw (5,0.25)-- (7.3,4.2);
\draw (7,4.3)-- (7,0);
\draw (6.7,4.9) node[anchor=north west] {$L_2$};
\draw (4.4,0.4) node[anchor=north west] {$L_1$};
\draw (4.4,1.25) node[anchor=north west] {$l_1$};
\draw (4.4,2.25) node[anchor=north west] {$l_2$};
\draw (4.4,3.25) node[anchor=north west] {$l_3$};
\begin{scriptsize}
\fill [color=black] (-2,1) circle (2.5pt);
\fill [color=black] (0,1) circle (2.5pt);
\fill [color=black] (6,2) circle (2.5pt);
\fill [color=black] (7,3) circle (2.5pt);
\fill [color=black] (7,2) circle (2.5pt);
\fill [color=black] (7,1) circle (2.5pt);
\end{scriptsize}
\end{tikzpicture}

If we blow-up at a $k^{*}$-fixed point, then $k^{*}$-action uniquely extends (\cite[Lemma 1.8]{OW77}). Thus $k^{*}$-action on $\P^2$ extends to $S$. Since, smooth projective rational surfaces with effective $k^{*}$-action are Mori dream surfaces (\cite{Kn}), $S$ is a Mori dream rational surface. There is a section $E$ of the elliptic fibration $\pi \colon S \rightarrow \P^1$ meeting two singular fibers at $p$ and $q$, respectively. Since any negative self-intersection curve is $k^{*}$-invariant (\cite[Proposition 1.9]{OW77}), $p$ and $q$ are $k^{*}$-invariant.

Let $f(0) \colon S_p(0) \rightarrow X$ be the blow-up at $p$. Note that $-K_S = 2L_2 + L_1 + E_1 + E_2 + E_3$. The Zariski decomposition $-K_{S_p(0)}=P+N$ is given by $P=0$ and $N=2L_2 + L_1 + E_1 + E_2 + E_3$, and hence, $\kappa(-K_{S_p(0)})=0$. Since the intersection points of negative self-intersection curves are $k^{*}$-invariant points, there is a $k^{*}$-invariant point on some component of $N$. Let $f(1) \colon S_p(1) \rightarrow S_p(0)$ be the blow-up at a $k^{*}$-invariant point on $N$. Then $S_p(1)$ has $k^{*}$-action, and $f(1)$ is a redundant blow-up. We can repeat this construction $f(n) \colon S_p(n) \rightarrow S_p (n-1)$ for each $n \geq 2$. Note that $S_p(n)$ is a Mori dream rational surface with $\kappa(-K_{S_p(n)})=0$ of Picard number $n+11$ for each $n \geq 0$.

For each $n \geq 0$, let $g(n) \colon \widetilde{S}_q(n) \rightarrow S_p(n)$ be the blow-up at the $k^{*}$-fixed point $q$. Note that there is a birational surjective morphism $h(n) \colon\widetilde{S}_q(n) \rightarrow \widetilde{S}_q (n-1)$. We claim that $\kappa(-K_{\widetilde{S}_q(n)})=-\infty$. Indeed, if $\kappa(-K_{\widetilde{S}_q(n)})=0$, then $g(n)$ is a redundant blow-up by Theorem \ref{chbl}. However, for the Zariski decomposition $-K_{S_p(n)} = P(n) + N(n)$, the point $q$ is not in the support of $N(n)$, and hence, $\mult_q N(n)=0$. This is a contradiction. Thus $\widetilde{S}_q(n)$ is a Mori dream rational surface with $\kappa(-K_{\widetilde{S}_q(n)})=-\infty$ of Picard number $n+12$ for each $n \geq 0$.

\[
 \xymatrix{
 \cdots \ar[r]^-{h(2)} & \ar[d]^-{g(1)} \widetilde{S}_q(1) \ar[r]^-{h(1)}&\ar[d]^-{g(0)} \widetilde{S}_q(0) & & \\
\cdots \ar[r]^-{f(2)} & S_p(1) \ar[r]^-{f(1)} & S_p(0) \ar[r]^-{f(0)} & \ar[d]_-{\pi} S \ar[r]^-{\phi} & \P^2\\
 & & & \P^1 &
}
\]
\end{example}

\begin{remark}\label{-infrem}
Note that there exists a $k^*$-invariant redundant point $p'$ on $S_p(n)$ with $\mult_{p'} N(n)=1$ for $n \geq 0$ so that the surface obtained by blowing up at the point $p'$ is a Mori dream space.
\end{remark}

\begin{remark}\label{extell}
We assume that $k=\C$. If we contract $E$ on $S$, then we get the minimal resolution of a degree one del Pezzo $\C^{*}$-surface of Picard number one with two canonical singularities of the same type $D_4$. Del Pezzo $\C^{*}$-surfaces of Picard number one with canonical singularities are completely classified (\cite[Theorem 5.6]{HS10}). From this classification, we can easily derive that there are only four extremal rational elliptic $\C^*$-surfaces $X_{11}, X_{22}, X_{33}$, and $X_{44}$ among the list in \cite[Theorem 4.1]{MP86}. We can carry out a similar construction in Example \ref{-infty} for these elliptic surfaces.
\end{remark}


Now, we construct a Mori dream rational surface $S$ with $\kappa(-K_S)=0$ admiting a redundant blow-up $\widetilde{S} \rightarrow S$ such that $\widetilde{S}$ is not a Mori dream space. To do this, we need the following easy lemma.

\begin{lemma}\label{ka=0K^2=0}
There is no Mori dream rational surface $S$ with $\kappa(-K_S)=0$ and $K_S^2=0$.
\end{lemma}

\begin{proof}
By the Riemann-Roch formula, we get $h^0(\mathcal{O}_S(-K_S)) \geq 1$, and hence, we can have the Zariski decomposition $-K_S=P + N$.  Note that $0 = (-K_S)^2 = P^2 + N^2$. Suppose that $N \neq 0$. Then $N^2<0$, and hence, $P$ and $-K_S$ is big, which is a contradiction. Thus we have $N=0$. Then $-K_S=P$ is nef. However, since $\kappa(-K_S)=0$, it follows that $-K_S$ is not semiample. In particular, $Y$ is not a Mori dream space.
\end{proof}

\begin{example}\label{nonmdrsex}
Consider an effective $k^*$-action on $\P^2 = \Proj k[x,y,z]$ by $t \cdot (x,y,z)=(t^3x, t^2y,z)$. This action preserves the cubic pencil determined by $z^3$ and $x^2z + y^3$. Let $X_{22} \to \P^2$ be the blow-up at the base points of the cubic pencil. Then $X_{22}$ is a relatively minimal rational elliptic surface with two singular fibers; one is a cuspidal rational curve $C$ and the other one consists of nine $(-2)$-curves forming the dual graph $\widetilde{E}_8$. The elliptic fibration of $X_{22}$ has a unique section $E_s$ meeting $C$ at a point $p$. Let $X_p \rightarrow X_{22}$ be the blow-up at $p$ with the exceptional divisor $E_p$. Then $-K_{X_p} = 0 + C$ is the Zariski decomposition, and hence, $\kappa(-K_{X_p})=0$. Throughout this example, we use the same notations for strict transforms. Note that $X_{22}$ admits an effective $k^*$-action and $p$ is a $k^*$-invariant point. Thus $X_p$ is a Mori dream rational surface (\cite{Kn}). Note that every point on $C$ is a redundant point on $X_p$. Let $q$ be a point on $C$ different from $k^*$-invariant points, and let $X_{pq} \rightarrow X_p$ be the redundant blow-up at $q$ with the exceptional divisor $E_q$.

\begin{claim}
$X_{pq}$ is not a Mori dream space.
\end{claim}

To prove the claim, we contract the curve $E_p$ and then the curve $E_s$ so that we obtain a composition of blow-downs $X_{pq} \rightarrow X_q \rightarrow X'$. It suffices to show that $X'$ is not a Mori dream space. Contracting $E_s$ on $X_{22}$ or $E_q$ on $X'$, we obtain a weak del Pezzo surface $X$.
\[
 \xymatrix{
&X_p \ar[r] & X_{22} \ar[rd] &\\
X_{pq} \ar[ru] \ar[rd] &&& X\\
&X_q \ar[r] & X' \ar[ru]_{\pi}&
}\]
Note that $-K_{X'} = \pi^{*}(-K_X) - E_q = C$. Since $K_{X'}^2 = C^2 = 0$, it follows that $-K_{X'}$ is nef, thus, $\kappa(-K_{X'})=0$ or $1$. First, we show that $\kappa(-K_{X'})=0$. Note that $\dim |-K_X|=1$. By considering the elliptic fibration on $X_{22}$, we see that $C$ is the unique curve in $|-K_X|$ passing through $q$. Thus $|-K_{X'}|$ has one element $C$, and hence, $h^0(\mathcal{O}_{X'}(C))=1$. From the following exact sequence
$$
0 \rightarrow \mathcal{O}_{X'} \rightarrow \mathcal{O}_{X'}(C) \rightarrow \mathcal{O}_C(C) \rightarrow 0,
$$
we obtain $h^0(\mathcal{O}_C(C))=0$. Note that the group of Cartier divisors of degree zero on the cuspidal rational curve $C$ is isomorphic to the additive group $\textbf{G}_a$. Since there is no torsion element in $\textbf{G}_a$, we obtain $h^0(\mathcal{O}_C(mC))=0$ for any integer $m>0$. By twisting the above exact sequence by $\mathcal{O}_{X'}(mK_{X'})$, we get
$$
h^0(\mathcal{O}_{X'}(-(m+1)K_{X'}))=h^0(\mathcal{O}_{X'}(-mK_{X'}))= \cdots = h^0(\mathcal{O}_{X'}(-K_{X'}))=1
$$
for all integers $m>0$, i.e., $\kappa(-K_{X'})=0$. By Lemma \ref{ka=0K^2=0}, $X'$ is not a Mori dream space.
\end{example}

\begin{remark}\label{nonmdrsrmk}
Note that $-K_{X'}$ is nef but not semiample. Thus the pull-back of $-K_{X'}$ to $X_{pq}$ is nef but not semiample. Furthermore, the effective cone of $X_{pq}$ is not rational polyhedral. To see this, note that the dual graph of $(-2)$-curves on $X'$ does not form one of $H\widetilde{E}_8, H\widetilde{D}_8$ or $H\widetilde{A}_8$. By Nikulin's classification of surfaces $(***)$ in \cite[p.84]{Nik00}, the effective cone of $X'$ is not rational polyhedral.
\end{remark}

\section{Minimal resolutions of rational $\mathbb{Q}$-homology projective planes}\label{qhpps}

In this section, we investigate the relation between Mori dream rational surfaces and minimal resolutions of rational $\Q$-homology projective planes. First, we prove Theorem \ref{minen}.

\begin{proof}[Proof of Theorem \ref{minen}]
Recall that $\bar{S}$ is a rational surface with log terminal singularities having Picard number one, and $g \colon S \rightarrow \bar{S}$ is the minimal resolution. Since $-K_{\bar{S}}$ is nef, we have two cases: $-K_{\bar{S}}$ is ample or $-K_{\bar{S}}$ is numerically trivial. If $-K_{\bar{S}}$ is ample, then $S$ is a big anticanonical rational surface. Thus the assertion immediately follows.

Now, assume that $-K_{\bar{S}}$ is numerically trivial and $\bar{S}$ does not contain any canonical singularity. The Zariski decomposition $-K_S = P +N $ is given by
$$P=g^* (-K_{\bar{S}})=0 \hbox{ and } N=\sum_{i=1}^{l} a_i E_i, $$
where each $E_i$ denotes an irreducible component of the $g$-exceptional divisor, and we have $0 < a_i < 1$ for all $i$.

First, we show that the effective cone $\Eff(S)$ is rational polyhedral. By Lemma \ref{cone}, it suffices to show that there are only finitely many ($-1$)-curves and ($-2$)-curves on $S$.  Let $H$ be an ample generator of $\Pic(\bar{S})$. We may write every integral curve not in the support of $N$ as
$$C = g^* (bH) + \sum_{i=1}^l b_i E_i.$$
Note that $b \geq 0$.

Suppose that $C$ is a ($-1$)-curve. Let $\lambda_i := C.E_i$ be a nonnegative integer for each $i$. Then we have
$$1=C.(-K_S) = C.N = \sum_i a_i C.E_i = \sum_i a_i \lambda_i.$$
Since each $a_i$ is a positive rational number, there are finitely many possibilities for $(\lambda_1, \ldots, \lambda_l)$. Now, we obtain
$$\lambda_j = C.E_j=g^{*}(bH).E_j + \sum_i b_i E_i.E_j=\sum_i b_i E_i.E_j.$$
Since the intersection matrix of irreducible components of $N$ is negative definite, $(b_1, \ldots, b_l)$ is determined when $(\lambda_1, \ldots, \lambda_l)$ is given.  Moreover, we have
$$-1 = C^2 = f^*(b H)^2  + \left( \sum_i b_i E_i \right)^2.$$
Thus $\pi^*(bH)^2$ is determined by $(b_1, \ldots, b_l)$, so is $b$. We have shown that there are finitely many possibilities for $b$ and $(b_1, \ldots, b_l)$. This means that there are only finitely many ($-1$)-curves on $S$.

Suppose now that $C$ is a $(-2)$-curve not in the support of $N$. Then we have
$$-K_S.C=N.C=\sum_i a_i E_i.C = 0,$$
and hence, $E_i.C=0$ for all $i$. Thus we may write $C=g^*(bH)$ for some $b \geq 0$, so $C^2 \geq 0$, which is a contradiction. Hence, every $(-2)$-curve is contained in the support of $N$.

It only remains to show that every nef effective $\Q$-divisor $M$ is semiample. For sufficiently small $\epsilon>0$, the pair $(S, N+ \epsilon M)$ is Kawamata log terminal, and $K_S + N + \epsilon M = \epsilon M$ is nef. By the log abundance theorem for surfaces (see \cite{FM91} for $\cha(k)=0$ and \cite{Ta} for $\cha(k)>0$), $M$ is semiample. We complete the proof.
\end{proof}

\begin{remark}
Since $\bar{S}$ does not contain any canonical singularities, every $g$-exceptional curve appears in $N$. It plays an important role in showing that $S$ contains finitely many ($-1$)-curves. There exists a rational surface with numerically trivial anticanonical divisor containing canonical singularities (see \cite[Section 6]{HKex}). We do not know whether the minimal resolution of such a surface is a Mori dream rational surface.
\end{remark}


If the anticanonical divisor $-K_{\bar{S}}$ of a normal projective rational surface $\bar{S}$ of Picard number one is numerically trivial, then the anticanonical Iitaka dimension $\kappa(-K_S)=0$ of the minimal resolution $S$, since $P=0$ for the Zariski decomposition $-K_S = P+N$.

In the remainder of the section, we assume that $k=\C$. Recall that a \emph{rational $\mathbb{Q}$-homology projective plane} $\bar{S}$ is, by definition, a complex normal projective rational surface with at worst quotient singularities and the second topological Betti number $b_2(\bar{S}) = 1$. The following is an immediate consequence of Theorem \ref{minen}.

\begin{corollary}\label{qhcor}
Let $\bar{S}$ be a rational $\Q$-homology projective plane, and let $g \colon  S \rightarrow \bar{S}$ be its minimal resolution. Assume that $-K_{\bar{S}}$ is nef and $\bar{S}$ does not contain any rational double point. Then $S$ is a Mori dream space.
\end{corollary}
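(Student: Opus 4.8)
The plan is to check that $\bar{S}$ meets every hypothesis of Theorem~\ref{minen}, after which the conclusion drops out with no further work. First I would translate the standing assumptions. We are working over $k=\C$, and for surfaces a quotient singularity is the same thing as a log terminal singularity (see \cite[Proposition 4.18]{KM98}); hence the definition of a rational $\Q$-homology projective plane already guarantees that $\bar{S}$ has at worst log terminal singularities. Next I would pin down the Picard number: since $\bar{S}$ is projective its N\'eron--Severi rank is at least one, while for a rational surface $\Pic(\bar{S})\otimes\Q$ injects into $H^2(\bar{S},\Q)$, so $\rho(\bar{S})\le b_2(\bar{S})=1$; therefore $\rho(\bar{S})=1$.

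The remaining point is the singularity condition. A rational double point is precisely a canonical (Du Val) surface singularity, so the hypothesis that $\bar{S}$ contains no rational double point is exactly the statement that $\bar{S}$ contains no canonical singularity. Together with the standing assumption that $-K_{\bar{S}}$ is nef, this means $\bar{S}$ satisfies all the hypotheses of Theorem~\ref{minen}, whence the Cox ring of $S$ is finitely generated.

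Finally, $S$ is the minimal resolution of a rational surface, so it is itself a smooth projective rational surface and in particular $q(S)=0$; a smooth projective surface with $q=0$ and finitely generated Cox ring is a Mori dream surface, i.e.\ a Mori dream space, which is what we want. I do not expect any genuine obstacle here: the corollary is a direct specialization of Theorem~\ref{minen}, and the only thing to be careful about is the dictionary between the $\Q$-homology-projective-plane terminology and the phrasing of Theorem~\ref{minen} (quotient $\Leftrightarrow$ log terminal, $b_2=1\Rightarrow\rho=1$, rational double point $\Leftrightarrow$ canonical singularity).
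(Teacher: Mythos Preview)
Your proposal is correct and follows exactly the approach the paper intends: the paper states this corollary as ``an immediate consequence of Theorem~\ref{minen}'' without further proof, and your argument simply spells out the routine dictionary (quotient $\Leftrightarrow$ log terminal over $\C$, $b_2=1\Rightarrow\rho=1$, rational double point $\Leftrightarrow$ canonical singularity) needed to see that the hypotheses match.
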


Using Theorem \ref{minen}, we construct examples of Mori dream rational surfaces with anticanonical Iitaka dimension $0$.

\begin{example}\label{0ex}
In Section 4 of \cite{HKex}, the construction of the minimal resolution $Z:=Z(3,3,3,3)$ of a rational $\Q$-homology projective planes $T(3,3,3,3)$ with numerically trivial anticanonical divisor having two cyclic quotient singularities of the same type $[2,2,3,3,2,2]$ is given. In Section 5 of \cite{HKex}, the minimal resolution $Z(2)$ of rational $\Q$-homology projective plane $S(2)$ with numerically trivial anticanonical divisor containing a unique cyclic quotient singularity of type $[3,2,2,2,2,2,2,2,2,3]$ is also explicitly constructed. Note that $\kappa(-K_{Z(3,3,3,3)})=\kappa(-K_{Z(2)})=0$
\end{example}


\begin{remark}\label{ncob}
Each Mori dream rational surface in Example \ref{0ex} is obtained by blow-ups of a big anticanonical rational surface at one point. They are different from the Mori dream rational surface $\widetilde{Y}$ with $\kappa(-K_{\widetilde{Y}})=0$ in \cite[Section 6]{LT11}, because the Zariski decompositions of anticanonical divisors are different. On the other hand, $\widetilde{Y}$ and $Z(2)$ are Coble rational surfaces in the sense of Dolgachev and Zhang (\cite{DZ01}), but $|-40K_{Z(3,3,3,3)}|$ is nonempty and $|-mK_{Z(3,3,3,3)}|$ is empty for $0<m<40$. Hence, $Z(3,3,3,3)$ is not a Coble rational surface.
\end{remark}

Mori dream rational surfaces in Example \ref{0ex} have redundant points by \cite[Theorem 1.2]{HP}. Thus we get more Mori dream rational surfaces with anticanonical Iitaka dimension $0$. Furthermore, we obtain the following (cf. \cite[Theorem 1.4]{HP} and \cite[Remark 3]{TVV10}).

\begin{proposition}\label{nominen}
There exists a Mori dream rational surface $\widetilde{Z}$ with $\kappa(-K_{\widetilde{Z}})$$=0$ such that $(\widetilde{Z}, -K_{\widetilde{Z}})$ is a log Calabi-Yau pair (in the sense of \cite{Tot10}) but not a minimal resolution of a normal projective rational surface $S$ with rational singularities such that $-K_S$ is nef.
\end{proposition}

\begin{proof}
Consider the rational surface $Z:=Z(3,3,3,3)$ in Example \ref{0ex}. Let $-K_Z = P+N$ be the Zariski decomposition. By simple calculation, we can easily see that the intersection $p$ of two ($-3$)-curves in $Z$ is a redundant point and $\mult_p N= \frac{3}{2}$. Let $f \colon  \widetilde{Z} \rightarrow Z$ be the redundant blow-up at $p$, and let $-K_{\widetilde{Z}} = \widetilde{P}+\widetilde{N}$ be the Zariski decomposition. By Theorem \ref{redfg}, $\widetilde{Z}$ is a Mori dream rational surface with $\kappa(-K_{\widetilde{Z}})=0$, and $(\widetilde{Z}, -K_{\widetilde{Z}})$ is a log Calabi-Yau pair. The exceptional curve $E$ of $f$ is contained in the support of $\widetilde{N}$ by Lemma \ref{redlem}. Suppose now that $\widetilde{Z}$ is the minimal resolution of a normal projective rational surface $S$ with rational singularities such that $-K_S$ is nef. Then $\widetilde{N}$ consists of only exceptional divisors on the minimal resolution. Thus $\widetilde{N}$ does not contain a ($-1$)-curve, which is a contradiction.
\end{proof}

Furthermore, we can get an example of Mori dream rational surface with $\kappa(-K)=0$ whose contractions are all big anticanonical rational surfaces.

\begin{proposition}\label{0to2}
There is a Mori dream rational surface $X$ with $\kappa(-K_X)=0$ such that for any blow-down $X \rightarrow Y$ contracting one $(-1)$-curve, we have $\kappa(-K_Y)=2$.
\end{proposition}

\begin{proof}
Let $X:=Z(2)$ be the surface in Example \ref{0ex}, and let $-K_X = 0 + N$ be the Zariski decomposition. The Picard number of $X$ is 11, and the number of irreducible components of $N$ is 10. Let $f \colon X \rightarrow Y$ be a blow-down contracting one $(-1)$-curve $E$.
Suppose that $\kappa(-K_Y) \leq 1$. By Lemma \ref{ka=0K^2=0}, we only have to consider the case $\kappa(-K_Y)=1$. Then $Y$ is a relatively minimal rational elliptic surface. The elliptic fibration of $Y$ is given by $|-mK_Y|$ for some integer $m>0$. There is a unique fiber $F$ of the elliptic fibration of $Y$ passing through the point $p=f(E)$, and $\mult_p(F)=m$. Then we have
$$
-K_X = f^*(-K_Y)-E = \frac{1}{m}f^*(F) - E = \frac{1}{m}f^{-1}_{*}F
$$
so that $F$ has 10 irreducible components.
Thus the Jacobian fibration of $Y$ has a fiber containing 10 irreducible components. However, there is no extremal rational elliptic surface with a singular fiber containing 10 irreducible components (see \cite[Theorem 4.1]{MP86}). It is a contradiction, and hence, $\kappa(-K_Y)=2$.
\end{proof}

On the other hand, there exist infinitely many rational $\mathbb{Q}$-homology projective planes with ample canonical divisors (see \cite{HKex}). However, we do not know whether the Cox rings of minimal resolutions of those surfaces are finitely generated.

\begin{question}\label{qhpp}
Let $\bar{S}$ be a rational $\mathbb{Q}$-homology projective plane, and let $S$ be its minimal resolution. Is the Cox ring of $S$ finitely generated?
\end{question}

\begin{remark}\label{Ohashi}
The rationality assumption in Question \ref{qhpp} is essential. Indeed, there is an Enriques surface with infinite automorphism group having nine $(-2)$-curves forming $D_8 \oplus A_1$ or $E_7 \oplus 2A_1$ (\cite{HKO}). Note that there are infinitely many $(-2)$-curves. In particular, the Cox rings of those Enriques surfaces are not finitely generated. However, by contracting those nine curves, we obtain a $\Q$-homology projective plane and the contracting map is a minimal resolution.
\end{remark}


\end{document}